\newtheorem{theorem}{Theorem}
\newtheorem{lemma}[theorem]{Lemma}
\newtheorem{definition}[theorem]{Definition}
\newcommand{\tr}{\mathrm{T}}
\title{The Fan--Taussky--Todd inequalities and the Lumer--Phillips theorem}
\author{Benedict Bauer\thanks{Financial support from the Austrian Science Fund (FWF) under grants P~30750
and Y~1235 is gratefully acknowledged.}\\
University of Vienna \\
\tt{benedict.bauer@univie.ac.at}\\
\\
 Stefan Gerhold \\
TU Wien \\
\tt{sgerhold@fam.tuwien.ac.at}
}
\date{\today}
\numberwithin{equation}{section}
\numberwithin{theorem}{section}
\begin{document}

\maketitle

\begin{abstract}

We argue that a classical inequality due to Fan, Taussky and Todd (1955) is equivalent
to the dissipativity of a Jordan block. As the latter can be characterised via
the zeros of Chebyshev polynomials, we obtain a short new proof of the inequality.
Three other inequalities of Fan--Taussky--Todd are reproven similarly. By the Lumer--Phillips theorem,
the semigroup defined by the Jordan block is contractive. This yields new extensions
of the classical Fan--Taussky--Todd inequalities.
 As an application, we give an estimate for the partial sums of a Bessel function.
\end{abstract}

MSC 2020: 26D15, 33C45, 15A16, 33C10
\smallskip

Keywords: Fan-Taussky-Todd inequalities, Jordan block, tridiagonal matrix, Chebyshev polynomials, Lumer-Phillips theorem, modified Bessel function of the first kind

\section{Introduction}

In 1955, Fan, Taussky and Todd proved the following two theorems (use the identity $\cos 2\theta=1-2\sin^2\theta$ to get the precise form stated
in~\cite{FaTaTo55}):
\begin{theorem}\cite[Theorem~9]{FaTaTo55}\label{thm:ftt1}
  For real numbers $a_1,\dots,a_n$, with $a_0:=a_{n+1}:=0$, we have
  \begin{equation}\label{eq:ftt1}
    \sum_{k=1}^{n+1}(a_k-a_{k-1})^2 \geq 2\Big(1-\cos \frac{\pi}{n+1}\Big) \sum_{k=1}^n a_k^2.
  \end{equation}
\end{theorem}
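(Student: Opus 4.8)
The plan is to recognise the left-hand side of~\eqref{eq:ftt1} as a quadratic form in $a_1,\dots,a_n$ and to reduce the inequality to the computation of the smallest eigenvalue of an explicit tridiagonal matrix. Writing $a=(a_1,\dots,a_n)^\tr$ and using the boundary conventions $a_0=a_{n+1}=0$, a routine expansion gives
\[
  \sum_{k=1}^{n+1}(a_k-a_{k-1})^2 = a^\tr M a,
\]
where $M$ is the $n\times n$ real symmetric tridiagonal matrix with all diagonal entries equal to $2$ and all sub- and superdiagonal entries equal to $-1$, i.e.\ the second-difference matrix with Dirichlet boundary conditions. Since $M$ is symmetric, the Rayleigh quotient bound yields $a^\tr M a\ge\lambda_{\min}(M)\,\|a\|^2=\lambda_{\min}(M)\sum_{k=1}^n a_k^2$, so the entire statement reduces to the single claim $\lambda_{\min}(M)=2\bigl(1-\cos\tfrac{\pi}{n+1}\bigr)$.

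To pin down the spectrum of $M$ I would simply exhibit the eigenvectors. For $j=1,\dots,n$ put $v^{(j)}_k=\sin\tfrac{jk\pi}{n+1}$, and note that the ``phantom'' entries $v^{(j)}_0=v^{(j)}_{n+1}=0$ vanish, so that $(Mv^{(j)})_k=-v^{(j)}_{k-1}+2v^{(j)}_k-v^{(j)}_{k+1}$ holds uniformly for $k=1,\dots,n$. The sum-to-product identity $\sin\tfrac{j(k-1)\pi}{n+1}+\sin\tfrac{j(k+1)\pi}{n+1}=2\cos\tfrac{j\pi}{n+1}\sin\tfrac{jk\pi}{n+1}$ then shows $Mv^{(j)}=2\bigl(1-\cos\tfrac{j\pi}{n+1}\bigr)v^{(j)}$. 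These $n$ eigenvalues are pairwise distinct, hence exhaust the spectrum, and the smallest of them is attained at $j=1$. Alternatively --- and this is presumably nearer to the paper's Chebyshev/Jordan-block perspective --- one can identify $\det(\lambda I-M)$ with a Chebyshev polynomial of the second kind via the three-term recurrence satisfied by the leading principal minors of a tridiagonal matrix, and then read off the zeros $2\bigl(1-\cos\tfrac{j\pi}{n+1}\bigr)$ from the known roots of $U_n$.

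Finally, equality in~\eqref{eq:ftt1} holds precisely when $a$ is a scalar multiple of the ground state $v^{(1)}=(\sin\tfrac{\pi}{n+1},\dots,\sin\tfrac{n\pi}{n+1})^\tr$, which also shows that the constant $2(1-\cos\tfrac{\pi}{n+1})$ cannot be improved. The only genuine computation here is locating the spectrum of $M$, and I expect the main (mild) obstacle to be carrying that out cleanly: either performing the sine-vector verification without getting lost in trigonometric bookkeeping, or setting up the Chebyshev correspondence carefully enough to quote the positions of the zeros. Everything else is the one-line Rayleigh-quotient reduction together with a standard symmetric-matrix eigenvalue argument.
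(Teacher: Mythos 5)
Your proposal is correct: the identity $\sum_{k=1}^{n+1}(a_k-a_{k-1})^2=a^\tr Ma$ with $M$ the Dirichlet second-difference matrix, the Rayleigh-quotient reduction to $\lambda_{\min}(M)$, and the verification that the sine vectors $v^{(j)}_k=\sin\frac{jk\pi}{n+1}$ are eigenvectors with the $n$ distinct eigenvalues $2\bigl(1-\cos\frac{j\pi}{n+1}\bigr)$ are all sound, and the equality discussion is right since the bottom eigenvalue is simple. The paper performs the same underlying reduction, phrased as dissipativity of the Jordan block $J_n(\alpha)$ with $\alpha=-\cos\frac{\pi}{n+1}$: symmetrizing gives $B_n(\alpha)=J_n(\alpha)^\tr+J_n(\alpha)$, which is exactly $-M$ shifted along the diagonal, so negative semidefiniteness of $B_n(\alpha)$ is your eigenvalue bound in different clothing. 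Where you differ is in how the spectrum is located: you exhibit explicit eigenvectors, while the paper uses the determinant evaluation $\det B_n(x)=U_n(x)$ and reads the eigenvalues off the zeros $\cos\frac{k\pi}{n+1}$ of the Chebyshev polynomial of the second kind. Your eigenvector computation is more self-contained and immediately gives simplicity of the extremal eigenvalue (hence the equality case); the paper's determinant route pays off when it moves to Theorem~\ref{thm:ftt2}, where the perturbed matrix $\tilde J_n$ is no longer Toeplitz, explicit eigenvectors are not readily available, and the three-term recurrence for tridiagonal minors still yields $U_n-U_{n-1}$ with computable zeros. The dissipativity framing also feeds directly into the Lumer--Phillips generalizations of Section~\ref{se:gen}, which a pure eigenvalue statement about $M$ would not.
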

\begin{theorem}\cite[Theorem~8]{FaTaTo55}\label{thm:ftt2}
  For real numbers $a_1,\dots,a_n$, with $a_0:=0$, we have
  \begin{equation}\label{eq:ftt2}
    \sum_{k=1}^{n}(a_k-a_{k-1})^2 \geq 2\Big(1-\cos \frac{\pi}{2n+1}\Big) \sum_{k=1}^n a_k^2.
  \end{equation}
\end{theorem}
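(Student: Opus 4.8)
The plan is to read \eqref{eq:ftt2} as a bound on the smallest eigenvalue of a tridiagonal matrix and then exhibit that eigenvalue explicitly. Writing $a=(a_1,\dots,a_n)^{\tr}$, the convention $a_0=0$ turns the left-hand side into $a^{\tr}M_n a$, where $M_n$ is the $n\times n$ symmetric tridiagonal matrix with all off-diagonal entries equal to $-1$ and all diagonal entries equal to $2$ except the last, which equals $1$. Thus \eqref{eq:ftt2} asserts precisely that $\lambda_{\min}(M_n)\ge 2\bigl(1-\cos\tfrac{\pi}{2n+1}\bigr)$, and I expect equality. Parallel to the Jordan-block reformulation of \eqref{eq:ftt1}, this is equivalent to $N+N^{\tr}+e_n e_n^{\tr}\preceq 2\cos\tfrac{\pi}{2n+1}\,I$ with $N$ the nilpotent Jordan block, i.e.\ to the dissipativity of the rank-one perturbation $N+\tfrac12 e_n e_n^{\tr}-\cos\tfrac{\pi}{2n+1}\,I$ of a Jordan block; in either formulation what is needed is the top eigenvalue of a (perturbed) tridiagonal matrix, which is what Chebyshev polynomials deliver.

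The route I would write up is a direct diagonalisation of $M_n$. I would put $\lambda=2(1-\cos\theta)$ with $\theta\in(0,\pi)$ and try $v_k=\sin(k\theta)$. The addition formula $\sin((k-1)\theta)+\sin((k+1)\theta)=2\cos\theta\,\sin(k\theta)$ makes rows $1,\dots,n-1$ of $M_n v=\lambda v$ hold automatically --- the first row because $v_0=\sin 0=0$, which is exactly the encoded condition $a_0=0$. The last row, $-v_{n-1}+v_n=\lambda v_n$, differs from the would-be interior equation at $k=n$ only by the term $v_n-v_{n+1}$, so, extending $v$ through the recurrence, it is equivalent to the discrete Neumann condition $v_{n+1}=v_n$, that is $\sin((n+1)\theta)=\sin(n\theta)$; for $\theta\in(0,\pi)$ this forces $\theta=\theta_m:=\tfrac{(2m+1)\pi}{2n+1}$ with $m\in\{0,\dots,n-1\}$. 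These give $n$ distinct eigenvalues $\lambda_m=2(1-\cos\theta_m)$ with the nonzero eigenvectors $(\sin(k\theta_m))_{k=1}^n$, hence --- being eigenvectors for distinct eigenvalues --- a full eigenbasis; the smallest is $\lambda_0=2(1-\cos\tfrac{\pi}{2n+1})$, which is \eqref{eq:ftt2}, with equality exactly for $a_k\propto\sin\tfrac{k\pi}{2n+1}$. In the idiom of this paper, the whole paragraph is simply the fact that $\det(\lambda I-M_n)$ is, up to an affine substitution in $\lambda$ and a sign, the Chebyshev polynomial of the third kind $V_n$, whose zeros are the classical $\cos\tfrac{(2j-1)\pi}{2n+1}$, $j=1,\dots,n$; the largest of these, $\cos\tfrac{\pi}{2n+1}$, yields $\lambda_{\min}(M_n)$ at once.

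I would also record a shorter alternative that uses Theorem~\ref{thm:ftt1} as a black box and needs no new computation: reflect $(a_1,\dots,a_n)$ evenly about the midpoint $n+\tfrac12$, i.e.\ set $b_k=a_k$ for $0\le k\le n$ and $b_k=a_{2n+1-k}$ for $n+1\le k\le 2n+1$, so that $b_0=b_{2n+1}=0$. Then $\sum_{k=1}^{2n+1}(b_k-b_{k-1})^2=2\sum_{k=1}^n(a_k-a_{k-1})^2$ (the middle term $k=n+1$ vanishes since $b_{n+1}=b_n=a_n$) and $\sum_{k=1}^{2n}b_k^2=2\sum_{k=1}^n a_k^2$, so \eqref{eq:ftt2} is exactly Theorem~\ref{thm:ftt1} with $2n$ in place of $n$. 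The only real care is needed in the first route: one must treat the non-standard last row of $M_n$ correctly and make sure the sine-ansatz captures all $n$ eigenvalues rather than a proper subset --- both are settled by the Neumann-condition observation and by the distinctness of the $\theta_m$ in $(0,\pi)$. Everything else is routine trigonometry and linear algebra.
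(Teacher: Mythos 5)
Your proposal is correct; both of your routes go through. The paper's own proof is a cousin of your first route but organized differently: it writes the quadratic form as $a^{\tr}\bigl(\tilde{J}_n(\alpha)+\tilde{J}_n(\alpha)^{\tr}\bigr)a$ for a Jordan block perturbed in the $(n,n)$ entry, computes the characteristic polynomial by the three-term recurrence for tridiagonal determinants (Lemma~\ref{le:det}), identifies it as $U_n-U_{n-1}$ --- exactly your Chebyshev polynomial of the third kind, up to the sign-similarity $\operatorname{diag}(1,-1,1,\dots)$ and the affine substitution $x=1-\lambda/2$ you mention --- and then locates its zeros trigonometrically (Lemma~\ref{le:zeros}). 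You instead exhibit the eigenvectors $(\sin(k\theta_m))_{k=1}^n$ directly and read off the boundary condition $\sin((n+1)\theta)=\sin(n\theta)$; this buys you the extremal vectors, hence the equality case, for free, whereas the paper only needs the location of the extreme zero. Your handling of the nonstandard last row is sound: row $n$ of $M_nv=\lambda v$ differs from the interior relation by $v_n-v_{n+1}$, and the $n$ values $\theta_m=(2m+1)\pi/(2n+1)$ are distinct in $(0,\pi)$, so the sine ansatz does capture the whole spectrum. Your second route --- even reflection about $n+\tfrac12$, reducing \eqref{eq:ftt2} to \eqref{eq:ftt1} with $2n$ variables, where the middle increment vanishes because $b_{n+1}=b_n$ and both sides exactly double --- is also correct and is arguably the cheapest proof once Theorem~\ref{thm:ftt1} is in hand; the paper does not take it, presumably because the dissipativity of $\tilde{J}_n$ itself is what feeds the Lumer--Phillips generalization in Section~\ref{se:gen}.
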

The following converse inequalitites are special cases
of results by Milovanovi{\'c} and Milovanovi{\'c}~\cite{MiMi82}.
\begin{theorem}\label{thm:ftt conv1}
  For real numbers $a_1,\dots,a_n$, with $a_0:=a_{n+1}:=0$, we have
  \begin{equation}\label{eq:ftt conv1}
    \sum_{k=1}^{n+1}(a_k-a_{k-1})^2 \leq 2\Big(1+\cos \frac{\pi}{n+1}\Big) \sum_{k=1}^n a_k^2.
  \end{equation}
\end{theorem}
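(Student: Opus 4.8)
The plan is to recast~\eqref{eq:ftt conv1} as a spectral bound for a tridiagonal matrix and then read off the \emph{smallest} eigenvalue of that matrix from the zeros of a Chebyshev polynomial --- the mirror image of the argument that, in this paper, yields the direct inequalities of Theorems~\ref{thm:ftt1} and~\ref{thm:ftt2}.

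First I would turn the left-hand side into a quadratic form. Write $a=(a_1,\dots,a_n)^{\tr}\in\mathbb{R}^n$ and recall the conventions $a_0=a_{n+1}=0$. Expanding the squares and keeping careful track of the boundary terms gives
\[
  \sum_{k=1}^{n+1}(a_k-a_{k-1})^2
  = 2\sum_{k=1}^{n}a_k^2 - 2\sum_{k=2}^{n}a_k a_{k-1}
  = a^{\tr}\bigl(2I_n-A_n\bigr)a,
\]
where $A_n$ denotes the $n\times n$ symmetric tridiagonal matrix with zero diagonal and ones on both off-diagonals. Hence~\eqref{eq:ftt conv1} is equivalent to $a^{\tr}\bigl(2\cos\frac{\pi}{n+1}\,I_n+A_n\bigr)a\ge 0$ for all real $a$, i.e.\ to the positive-semidefiniteness statement $2\cos\frac{\pi}{n+1}\,I_n+A_n\succeq 0$, i.e.\ to the lower bound $\lambda_{\min}(A_n)\ge -2\cos\frac{\pi}{n+1}$ on the least eigenvalue of $A_n$.

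Second, I would cast this in the dissipativity language of the introduction: for $\lambda\in\mathbb{R}$ let $J_\lambda=\lambda I_n+N_n$ be the real Jordan block, $N_n$ carrying ones on the superdiagonal. Then $J_\lambda+J_\lambda^{\tr}=2\lambda I_n+A_n$, so $-J_\lambda$ is dissipative exactly when $\lambda\ge-\tfrac12\lambda_{\min}(A_n)$; choosing $\lambda=\cos\frac{\pi}{n+1}$, the inequality~\eqref{eq:ftt conv1} becomes precisely the assertion that this $-J_\lambda$ is dissipative, since for real $a$ one has $0\le a^{\tr}(J_\lambda+J_\lambda^{\tr})a=2\cos\frac{\pi}{n+1}\|a\|^2+a^{\tr}A_n a$, which rearranges to~\eqref{eq:ftt conv1}.

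The only substantive step left is the eigenvalue computation $\lambda_{\min}(A_n)=-2\cos\frac{\pi}{n+1}$. For this I would use the standard identification of the characteristic polynomial of $A_n$ with a Chebyshev polynomial of the second kind: the three-term recurrence satisfied by $\det(xI_n-A_n)$ gives $\det(xI_n-A_n)=U_n(x/2)$, whose zeros are $2\cos\frac{j\pi}{n+1}$ for $j=1,\dots,n$, the smallest being $2\cos\frac{n\pi}{n+1}=-2\cos\frac{\pi}{n+1}$. I do not expect a genuine obstacle here: this is the very Chebyshev input already used for the direct inequalities, now applied at the bottom of the spectrum rather than at the top, and the bound~\eqref{eq:ftt conv1} comes out sharp, with equality at the eigenvector $a_k\propto(-1)^{k-1}\sin\frac{k\pi}{n+1}$ of $A_n$. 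If anything, the only place calling for care is the bookkeeping in the first display --- checking that the conventions $a_0=a_{n+1}=0$ leave no stray boundary term, so that the quadratic form is exactly $a^{\tr}(2I_n-A_n)a$.
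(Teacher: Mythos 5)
Your proposal is correct and follows essentially the same route as the paper: both reduce~\eqref{eq:ftt conv1} to the positive semidefiniteness of $2\cos\frac{\pi}{n+1}I_n+A_n=J_\lambda+J_\lambda^{\tr}$ (the paper's $B_n(\alpha)$ with $\alpha=\cos\frac{\pi}{n+1}$, i.e.\ dissipativity of $-J_n(\alpha)$ as in Lemma~\ref{le:main}(ii)) and then read off the extreme eigenvalue from the zeros $2\cos\frac{j\pi}{n+1}$ of the Chebyshev characteristic polynomial $U_n(x/2)$. The only cosmetic difference is that you phrase the spectral step directly for the zero-diagonal matrix $A_n$ rather than for the shifted matrix $B_n(\alpha)$ with $\det B_n(x)=U_n(x)$; your remark on the equality case $a_k\propto(-1)^{k-1}\sin\frac{k\pi}{n+1}$ is a nice addition not made explicit in the paper.
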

\begin{theorem}\label{thm:ftt conv2}
  For real numbers $a_1,\dots,a_n$, with $a_0:=0$, we have
  \begin{equation}\label{eq:ftt conv2}
    \sum_{k=1}^{n}(a_k-a_{k-1})^2 \leq 2\Big(1-\cos \frac{2\pi}{2n+1}\Big) \sum_{k=1}^n a_k^2.
  \end{equation}
\end{theorem}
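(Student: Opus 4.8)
The plan is to recognise the left-hand side of \eqref{eq:ftt conv2} as a quadratic form $a^{\tr}M_na$ with $M_n$ tridiagonal, so that the inequality becomes a bound on the largest eigenvalue of $M_n$, and then to compute the spectrum of $M_n$ from the zeros of a Chebyshev polynomial.

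First I would expand: with $a_0=0$ and $a=(a_1,\dots,a_n)^{\tr}$,
\[
  \sum_{k=1}^{n}(a_k-a_{k-1})^2\;=\;2\sum_{k=1}^n a_k^2-a_n^2-2\sum_{k=2}^n a_k a_{k-1}\;=\;a^{\tr}M_n\,a ,
\]
where $M_n$ is the real symmetric $n\times n$ tridiagonal matrix with diagonal $(2,2,\dots,2,1)$ and all sub- and super-diagonal entries equal to $-1$. By the variational (Rayleigh) characterisation of the largest eigenvalue of a symmetric matrix, \eqref{eq:ftt conv2} is equivalent to $\lambda_{\max}(M_n)\le 2\bigl(1+\cos\tfrac{2\pi}{2n+1}\bigr)$, so it suffices to determine the spectrum of $M_n$; the constant will turn out to equal $\lambda_{\max}(M_n)$ exactly, so the bound is in fact sharp.

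To find the spectrum I would verify directly that, for $\theta_m:=\tfrac{(2m-1)\pi}{2n+1}$ with $m=1,\dots,n$, the vector with entries $v^{(m)}_k=\sin(k\theta_m)$ is an eigenvector of $M_n$ with eigenvalue $\lambda_m=2-2\cos\theta_m$. Since $v^{(m)}_0=0$, rows $1$ through $n-1$ all reduce to the identity $\sin((k-1)\theta)+\sin((k+1)\theta)=2\cos\theta\,\sin(k\theta)$, and only the last row is special: after using $2\cos\theta\sin(n\theta)=\sin((n+1)\theta)+\sin((n-1)\theta)$ it collapses to $\sin((n+1)\theta_m)=\sin(n\theta_m)$, that is, to $\cos\bigl((n+\tfrac12)\theta_m\bigr)=0$, which holds exactly because $(n+\tfrac12)\theta_m=\tfrac{(2m-1)\pi}{2}$. (Equivalently, in the Chebyshev language of the paper: expanding $\det(\lambda I-M_n)$ along its last row and putting $\lambda=2-2\cos\theta$ gives, after one use of the recurrence $U_k=2xU_{k-1}-U_{k-2}$ for the Chebyshev polynomials of the second kind, the compact formula $\det(\lambda I-M_n)=(-1)^{n-1}\bigl(U_{n-1}(\cos\theta)-U_n(\cos\theta)\bigr)$, whose zeros in $(0,\pi)$ are exactly the $\theta_m$.) As the $\theta_m$ are pairwise distinct, the $v^{(m)}$ form an eigenbasis, so the spectrum of $M_n$ is $\{\,2(1-\cos\theta_m):m=1,\dots,n\,\}$; reassuringly, the smallest of these, $2\bigl(1-\cos\tfrac{\pi}{2n+1}\bigr)$, is the constant in Theorem~\ref{thm:ftt2}. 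Because $t\mapsto1-\cos t$ increases on $(0,\pi)$, the largest eigenvalue is $\lambda_{\max}(M_n)=2\bigl(1-\cos\tfrac{(2n-1)\pi}{2n+1}\bigr)$, and $\tfrac{(2n-1)\pi}{2n+1}=\pi-\tfrac{2\pi}{2n+1}$ turns this into $2\bigl(1+\cos\tfrac{2\pi}{2n+1}\bigr)$, which is \eqref{eq:ftt conv2}.

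I do not anticipate a real obstacle. The one point demanding a little care is the last row of $M_n$, which is non-generic because of the corner entry $1$, so one must verify that its eigenvalue equation really collapses to the clean condition $\cos\bigl((n+\tfrac12)\theta\bigr)=0$; everything else is routine trigonometry (or, in the determinant version, a one-line Chebyshev identity). In the spirit of the paper one may also phrase \eqref{eq:ftt conv2} as the positive semidefiniteness of $2\cos\tfrac{2\pi}{2n+1}\,I+(N+N^{\tr})+e_n e_n^{\tr}$, with $N$ the nilpotent shift and $e_n$ the last coordinate vector --- i.e.\ as the dissipativity of a rank-one boundary perturbation of a shifted Jordan block --- and this positivity is once more governed by the Chebyshev zeros above.
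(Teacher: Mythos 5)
Your computation is sound and, in substance, it is the same argument as the paper's: the paper symmetrizes the modified Jordan block $\tilde J_n(\alpha)$ and reads off the eigenvalues of the resulting tridiagonal matrix from the zeros of $U_n-U_{n-1}$ (Lemmas~\ref{le:det} and~\ref{le:zeros}), while you diagonalize the Gram matrix $M_n$ of the quadratic form directly with the sine vectors $v^{(m)}_k=\sin(k\theta_m)$. The two matrices differ only by the shift $2(1-\alpha)I$ and conjugation by $\operatorname{diag}((-1)^k)$, and your boundary condition $\cos\bigl((n+\tfrac12)\theta\bigr)=0$ is precisely the content of Lemma~\ref{le:zeros}. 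Your version is somewhat more self-contained (no appeal to the tridiagonal determinant recurrence) and yields sharpness for free, since you exhibit the extremal eigenvector.

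There is, however, one point you must not pass over: the constant you actually establish is $2\bigl(1+\cos\tfrac{2\pi}{2n+1}\bigr)=4\cos^2\tfrac{\pi}{2n+1}$, so your closing remark that this ``is \eqref{eq:ftt conv2}'' is literally false, since \eqref{eq:ftt conv2} is printed with $2\bigl(1-\cos\tfrac{2\pi}{2n+1}\bigr)$. The discrepancy is not your mistake: the inequality with the printed constant fails (take $n=2$ and $a=(1,-1)$; the left side is $5$, while $2\bigl(1-\cos\tfrac{2\pi}{5}\bigr)\cdot 2\approx 2.76$), and the paper's own route --- Lemma~\ref{le:main2}(i) with $\alpha=-\cos\tfrac{2\pi}{2n+1}$ applied to the identity $\sum_{k=1}^n(a_k-a_{k-1})^2=2(1-\alpha)\sum_{k=1}^n a_k^2+2\,b^\tr\tilde J_n(\alpha)b$ for $b_k=(-1)^k a_k$ --- likewise produces $2\bigl(1+\cos\tfrac{2\pi}{2n+1}\bigr)$. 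So you have proved the correct, sharp statement, but you should say explicitly that the sign of the cosine in the theorem as printed must be corrected, rather than silently substituting the right constant in your second sentence and then asserting agreement with the displayed formula at the end.
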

Alzer~\cite{Al91} gave short proofs of Theorems~\ref{thm:ftt conv1}
and~\ref{thm:ftt conv2}.
In this note we give  new short proofs of Theorems~\ref{thm:ftt1}
and~\ref{thm:ftt conv1}, and analogous proofs of
 Theorems~\ref{thm:ftt2} and~\ref{thm:ftt conv2}. There are several proofs
 in the literature~\cite{Lu94,Re83,ZhXi05}.
Our approach is symmetric in the sense that our proofs of Theorems~\ref{thm:ftt1}
and~\ref{thm:ftt conv1} are trivial modifications of each other, and the same 
holds for Theorems~\ref{thm:ftt2} and~\ref{thm:ftt conv2}.
The proofs are based on the dissipativity of Jordan blocks. 
Besides reproving inequalities which are already known (Section~\ref{se:proofs}),
we note in Section~\ref{se:gen} that dissipativity characterizes contractiveness of the matrix
semigroup generated by the Jordan block, by the Lumer--Phillips theorem.
This  leads to  new generalizations
of Theorems~\ref{thm:ftt1} and~\ref{thm:ftt2}.
These generalizations are then applied to estimate
partial sums of the modified Bessel function of the first kind
of order zero.
In Section~\ref{se:strict}, we prove strict versions of the new inequalities.

\section{Proofs}\label{se:proofs}

Define the Jordan block
\[
  J_n(x):= \begin{pmatrix}
  x & 1 & & 0 \\
   & x & 1  & \\
  &  & \ddots  & 1\\
  0 & &  & x
  \end{pmatrix} \in \mathbb{R}^{n\times n}, \quad x\in\mathbb R.
\]
We use the following terminology from operator theory, and, as usual,
reserve the notion of semidefinitess for symmetric matrices.
\begin{definition} (see~\cite[p.~52]{Da80})
  A matrix $A\in\mathbb{R}^{n\times n}$ is \emph{dissipative},
  if $\langle A a,a\rangle\leq 0$ for all $a\in\mathbb{R}^n$.
\end{definition}
Since
\begin{align*}
 \langle  J_n(\alpha)a, a\rangle = \alpha \sum_{k=1}^n a_k^2+ \sum_{k=2}^n a_k a_{k-1},
  \quad a=(a_1,\dots,a_n)^\tr \in\mathbb{R}^n,
\end{align*}
it is clear that~\eqref{eq:ftt1} and~\eqref{eq:ftt conv1} are immediate
consequences of the following lemma.
\begin{lemma}\label{le:main}
  Let $\alpha\in\mathbb R$.
  
  (i) $J_n(\alpha)$ is dissipative if and only 
  if $\alpha \leq -\cos(\pi/(n+1))$.
  
  (ii) $-J_n(\alpha)$ is dissipative  if and only 
  if $\alpha \geq \cos(\pi/(n+1))$.
\end{lemma}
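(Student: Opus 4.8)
The plan is to reduce dissipativity of $J_n(\alpha)$ to a semidefiniteness condition on its symmetric part, and then to an eigenvalue computation that is handled by Chebyshev polynomials. For any real matrix $A$ the quadratic form $a\mapsto\langle Aa,a\rangle$ depends only on the symmetric part $\tfrac12(A+A^\tr)$, so $J_n(\alpha)$ is dissipative if and only if $S_n(\alpha):=\alpha I_n+\tfrac12 T_n$ is negative semidefinite, where $T_n:=J_n(0)+J_n(0)^\tr$ is the symmetric tridiagonal matrix with zero diagonal and ones on the two adjacent diagonals; this is exactly the identity for $\langle J_n(\alpha)a,a\rangle$ displayed before the lemma. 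Since $S_n(\alpha)$ is symmetric, negative semidefiniteness is equivalent to $\lambda_{\max}(S_n(\alpha))\le 0$, that is, to $\alpha+\tfrac12\lambda_{\max}(T_n)\le 0$.

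Next I would compute the spectrum of $T_n$. Writing $D_n(\lambda):=\det(\lambda I_n-T_n)$ and expanding the determinant along the last row yields the three-term recurrence $D_n=\lambda D_{n-1}-D_{n-2}$ with $D_0=1$, $D_1=\lambda$, which is precisely the recurrence satisfied by $U_n(\lambda/2)$, where $U_n$ is the Chebyshev polynomial of the second kind. Hence $D_n(\lambda)=U_n(\lambda/2)$, and the eigenvalues of $T_n$ are $2\cos\!\big(k\pi/(n+1)\big)$, $k=1,\dots,n$, the zeros of $U_n$ scaled by $2$. (Alternatively one verifies directly that $\big(\sin(jk\pi/(n+1))\big)_{j=1}^n$ is an eigenvector for the eigenvalue $2\cos(k\pi/(n+1))$, using $\sin((j-1)\theta)+\sin((j+1)\theta)=2\cos\theta\,\sin(j\theta)$ and the boundary values $\sin 0=\sin k\pi=0$.) In particular $\lambda_{\max}(T_n)=2\cos(\pi/(n+1))$, attained at $k=1$, so the criterion of the first paragraph becomes $\alpha+\cos(\pi/(n+1))\le 0$, which is part~(i).

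For part~(ii), $-J_n(\alpha)$ is dissipative if and only if $-\alpha I_n-\tfrac12 T_n$ is negative semidefinite. Let $D=\mathrm{diag}\big(1,-1,1,\dots,(-1)^{n-1}\big)$, an orthogonal (indeed involutory) matrix; a direct check gives $DT_nD=-T_n$, hence $D(-\alpha I_n-\tfrac12 T_n)D=-\alpha I_n+\tfrac12 T_n$, which is the symmetric part of $J_n(-\alpha)$. Orthogonal conjugation preserves semidefiniteness, so $-J_n(\alpha)$ is dissipative if and only if $J_n(-\alpha)$ is, and by part~(i) this holds if and only if $-\alpha\le-\cos(\pi/(n+1))$, i.e.\ $\alpha\ge\cos(\pi/(n+1))$; equivalently, one simply rereads the first paragraph with $\lambda_{\min}(T_n)=2\cos(n\pi/(n+1))=-2\cos(\pi/(n+1))$ in place of $\lambda_{\max}(T_n)$. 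The only step that requires genuine work, the remainder being bookkeeping, is the eigenvalue computation for $T_n$: identifying its characteristic polynomial as a Chebyshev polynomial of the second kind and invoking the known location of that polynomial's zeros.
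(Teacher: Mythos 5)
Your proof is correct and follows essentially the same route as the paper: reduce dissipativity to negative semidefiniteness of the symmetric part, identify the relevant tridiagonal determinant with the Chebyshev polynomial $U_n$ via the three-term recurrence, and read off the extremal eigenvalue from the known zeros $\cos(k\pi/(n+1))$. The only cosmetic differences are that you diagonalize $T_n=B_n(0)$ once (and even supply the explicit $\sin$-eigenvector check that the paper cites from the literature) instead of shifting the argument of $U_n$ by $\alpha-\tfrac12\mu$, and you derive (ii) from (i) by the sign-flip conjugation rather than repeating the argument with the maximum of the zeros.
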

\begin{proof}
  Since $2a^\tr J_n(\alpha) a = a^\tr J_n(\alpha)^\tr a + a^\tr J_n(\alpha) a $, $a\in\mathbb{R}^n$,
  the first condition in~(i) is equivalent to $B_n(\alpha):= J_n(\alpha)^\tr+ J_n(\alpha)$ being 
  negative semidefinite, i.e.\ all its eigenvalues being non-positive.
  It is well-known~\cite[pp.~25-26]{Ch78} that $\det B_n(x) = U_n(x)$, $x\in\mathbb R$,
  where $U_n(x)$ is the $n$th Chebyshev polynomial of the second kind. Thus, any eigenvalue~$\mu$
  of~$B_n(\alpha)$ must satisfy
  \[
    0=\det(B_n(\alpha)-\mu I)= \det B_n(\alpha-\tfrac12 \mu) = U_n(\alpha -\tfrac12 \mu).
  \]
  Now note that all such $\mu$ are $\leq 0$ if and only if
  \[
    \alpha \leq \min_{1\leq k\leq n}\cos \frac{k\pi}{n+1}
    =\cos \frac{n\pi}{n+1} = - \cos \frac{\pi}{n+1},
  \]
  where $\cos(k\pi/(n+1))$ are the zeros of $U_n$. The proof of~(ii) is analogous;
  now~$\alpha$ must satisfy
  \[
    \alpha \geq \max_{1\leq k\leq n}\cos \frac{k\pi}{n+1} = \cos \frac{\pi}{n+1}.
    \qedhere
  \] 
\end{proof}
To prove Theorems~\ref{thm:ftt2} and~\ref{thm:ftt conv2}, define
\[
  \tilde{J}_n(x):= \begin{pmatrix}
  x & 1 & & &0 \\
   & x & 1  & &\\
  &  & \ddots  & 1 &\\
  & &  & x & 1\\
  0& & &  & x-\tfrac12
  \end{pmatrix} \in \mathbb{R}^{n\times n}, \quad x\in\mathbb R.
\]
%
\begin{lemma}\label{le:det} 
  For $x\in\mathbb R$ and $n\in\mathbb N$, we have the  determinant evaluation
  $\det (\tilde{J}_n(x)^\tr+\tilde{J}_n(x))=U_n(x) - U_{n-1}(x)$.
  As above, $U_n(x)$ denotes the $n$th Chebyshev polynomial of the second kind.
\end{lemma}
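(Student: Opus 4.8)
The plan is to compute $\det(\tilde J_n(x)^\tr + \tilde J_n(x))$ by a Laplace expansion along the last row, reducing it to determinants of the ordinary tridiagonal matrix $B_m(x) = J_m(x)^\tr + J_m(x)$, whose value $\det B_m(x) = U_m(x)$ was already recalled in the proof of Lemma~\ref{le:main}.

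First I would spell out $\tilde B_n(x) := \tilde J_n(x)^\tr + \tilde J_n(x)$: it is the symmetric tridiagonal matrix with ones on both off-diagonals and with $2x$ on the main diagonal, except that the $(n,n)$ entry equals $2x - 1$. In particular, $\tilde B_n(x)$ and $B_n(x)$ agree in all entries except the bottom-right one. For $n \geq 2$ I would then expand $\det \tilde B_n(x)$ along its last row, which has only the two nonzero entries $1$ (in column $n-1$) and $2x-1$ (in column $n$). The cofactor of the $(n,n)$ entry is the determinant of the top-left $(n-1)\times(n-1)$ principal submatrix, which is exactly $B_{n-1}(x)$ since the modification is confined to position $(n,n)$; and the cofactor of the $(n,n-1)$ entry, after one further expansion along its last column, equals $\det B_{n-2}(x)$, by the standard tridiagonal minor computation. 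Hence
\[
  \det \tilde B_n(x) = (2x-1)\det B_{n-1}(x) - \det B_{n-2}(x) = (2x-1)U_{n-1}(x) - U_{n-2}(x),
\]
where we adopt the convention $U_{-1} \equiv 0$ so that the case $n=1$ (for which $\tilde B_1(x) = (2x-1)$) is included.

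To conclude, I would apply the three-term recurrence $U_n(x) = 2x\,U_{n-1}(x) - U_{n-2}(x)$ for the Chebyshev polynomials of the second kind, which rewrites the right-hand side as $\big(2x\,U_{n-1}(x) - U_{n-2}(x)\big) - U_{n-1}(x) = U_n(x) - U_{n-1}(x)$, as claimed. I do not anticipate any genuine difficulty here: the only point that warrants an explicit line of justification is that the two minors arising in the cofactor expansion are the \emph{unmodified} matrices $B_{n-1}(x)$ and $B_{n-2}(x)$, and this is immediate because $\tilde J_n(x)$ differs from $J_n(x)$ only in the last diagonal entry. (As an alternative one could avoid the cofactor bookkeeping by writing $\tilde B_n(x) = B_n(x) - E_{nn}$ and using multilinearity of the determinant in the last row, which gives $\det\tilde B_n(x) = \det B_n(x) - \det B_{n-1}(x) = U_n(x)-U_{n-1}(x)$ directly.)
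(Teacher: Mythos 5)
Your proof is correct and follows essentially the same route as the paper: both reduce $\det(\tilde J_n(x)^\tr+\tilde J_n(x))$ to $(2x-1)U_{n-1}(x)-U_{n-2}(x)$ — the paper by quoting the classical recurrence for tridiagonal determinants, you by carrying out the Laplace expansion explicitly — and then finish with the Chebyshev recurrence $U_n=2xU_{n-1}-U_{n-2}$. Your parenthetical multilinearity argument, $\det\tilde B_n=\det B_n-\det B_{n-1}$, is a pleasant shortcut but not a substantively different proof.
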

\begin{proof}
 By a classical result on tridiagonal matrices~\cite[Chapter~XIII]{Mu60}, this determinant is~$b_n=b_n(x)$, where the sequence
  $(b_k)_{-1\leq k\leq n}$ is defined by $b_{-1}=0$, $b_0=1$, and
  \begin{align*}
     b_k &= 2x b_{k-1} - b_{k-2},\quad 1 \leq k < n,\\
     b_n &= (2x-1)b_{n-1}-b_{n-2}.
  \end{align*}
  For $k<n$, this is the recurrence for the Chebyshev polynomials of the second kind,
  and so $b_k=U_k$ for $k<n$. Finally,
  \[
    b_n = (2x-1)U_{n-1}-U_{n-2}=U_n - U_{n-1}. \qedhere
  \]
\end{proof}
\begin{lemma}\label{le:zeros}
  For $n\in\mathbb N$, the zeros of $U_n(x) - U_{n-1}(x)$ are $(-1)^{k+1}\cos(k\pi/(2n+1))$,
  $1\leq k \leq n$.
\end{lemma}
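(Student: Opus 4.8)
The plan is to find the zeros of $f_n(x) := U_n(x) - U_{n-1}(x)$ explicitly by exploiting the trigonometric representation of the Chebyshev polynomials. Writing $x = \cos\theta$ for $\theta \in (0,\pi)$, recall that $U_m(\cos\theta) = \sin((m+1)\theta)/\sin\theta$. Hence for $x = \cos\theta$ we have
\[
  f_n(\cos\theta) = \frac{\sin((n+1)\theta) - \sin(n\theta)}{\sin\theta}.
\]
Applying the sum-to-product identity $\sin A - \sin B = 2\cos\bigl(\tfrac{A+B}{2}\bigr)\sin\bigl(\tfrac{A-B}{2}\bigr)$ with $A = (n+1)\theta$ and $B = n\theta$ gives
\[
  f_n(\cos\theta) = \frac{2\cos\bigl(\tfrac{(2n+1)\theta}{2}\bigr)\sin(\theta/2)}{\sin\theta}
  = \frac{\cos\bigl(\tfrac{(2n+1)\theta}{2}\bigr)}{\cos(\theta/2)},
\]
using $\sin\theta = 2\sin(\theta/2)\cos(\theta/2)$. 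On $(0,\pi)$ the denominator $\cos(\theta/2)$ never vanishes, so the zeros in this range come precisely from $\cos\bigl(\tfrac{(2n+1)\theta}{2}\bigr) = 0$, i.e.\ $(2n+1)\theta/2 = (2j-1)\pi/2$, that is $\theta = \theta_j := (2j-1)\pi/(2n+1)$ for $j = 1,\dots,n$ (these are exactly the values of $\theta_j$ in $(0,\pi)$). This produces $n$ distinct numbers $x_j = \cos\theta_j$, and since $f_n$ is a polynomial of degree $n$ (its leading term is that of $U_n$), these are all of its zeros.

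The second step is purely cosmetic: rewrite $\cos\bigl((2j-1)\pi/(2n+1)\bigr)$ in the form claimed in the lemma, namely $(-1)^{k+1}\cos\bigl(k\pi/(2n+1)\bigr)$ for $1 \le k \le n$. For odd $k = 2j-1$ this is immediate. For even $k = 2j$ one uses $\cos(\pi - t) = -\cos t$ to write $-\cos\bigl(2j\pi/(2n+1)\bigr) = \cos\bigl(\pi - 2j\pi/(2n+1)\bigr) = \cos\bigl((2(n-j)+1)\pi/(2n+1)\bigr)$, which is again of the odd-index form $\cos((2j'-1)\pi/(2n+1))$ with $j' = n-j+1 \in \{1,\dots,n\}$. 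A short bookkeeping argument shows that as $k$ ranges over $1,\dots,n$, the numbers $(-1)^{k+1}\cos(k\pi/(2n+1))$ run through exactly the set $\{\cos((2j-1)\pi/(2n+1)) : 1 \le j \le n\}$ without repetition — indeed $\{(2j-1)\pi/(2n+1)\}$ together with $\{\pi - 2j\pi/(2n+1)\}$ exhaust the odd multiples of $\pi/(2n+1)$ in $(0,\pi)$. So the two descriptions of the zero set coincide.

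I do not anticipate a serious obstacle here; the computation is elementary once one passes to the $\cos\theta$ substitution. The only mild subtlety is making sure the list $\theta_j = (2j-1)\pi/(2n+1)$, $j = 1,\dots,n$, really does lie in $(0,\pi)$ and gives $n$ distinct values — this holds since $(2n-1)\pi/(2n+1) < \pi$ — and then confirming that a degree-$n$ polynomial with $n$ distinct zeros has no others. The re-indexing in the final step is the sort of thing that is trivial to see but slightly fiddly to write; I would state it as a one-line observation that the map $k \mapsto \pm\cos(k\pi/(2n+1))$ permutes the relevant arguments, rather than spelling out every case.
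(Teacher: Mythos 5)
Your proof is correct, and it takes a genuinely different (and arguably cleaner) route than the paper's. The paper does not compute a closed form for $U_n - U_{n-1}$; instead it takes each claimed zero $(-1)^{k+1}\cos(k\pi/(2n+1))$, writes it as $\cos\theta$ with $\theta = \pi - k\pi/(2n+1)$ (for $k$ even; odd $k$ is analogous), and verifies $U_n(\cos\theta) = U_{n-1}(\cos\theta)$ by observing that $n\theta$ and $(n+1)\theta$ are placed symmetrically about the half-integer multiple $(n - k/2 + 1/2)\pi$ of $\pi$, where $\sin$ is symmetric. You instead apply the sum-to-product identity once to get
\[
  U_n(\cos\theta) - U_{n-1}(\cos\theta) = \frac{\cos\bigl(\tfrac{(2n+1)\theta}{2}\bigr)}{\cos(\theta/2)},
\]
which identifies the difference as (up to normalization) the Chebyshev polynomial of the fourth kind and lets you read off all $n$ zeros at once, at the cost of a re-indexing step to match the $(-1)^{k+1}\cos(k\pi/(2n+1))$ form of the statement. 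Your version has the advantage of explicitly handling completeness and distinctness of the roots (the paper leaves the degree-counting implicit), and the closed form is reusable; the paper's version avoids the bookkeeping bijection between $\{(2j-1)\pi/(2n+1)\}$ and the signed cosines, since it verifies the roots directly in the form in which they are stated. Both arguments rest on the same representation $U_m(\cos\theta) = \sin((m+1)\theta)/\sin\theta$ and the identity $\cos(\pi - t) = -\cos t$, so the difference is in mechanism rather than in the underlying tools.
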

\begin{proof}
  Let~$k\in\{1,\dots,n\}$ be even. We have
  \[
    -\cos \frac{k\pi}{2n+1} = \cos\Big(\pi - \frac{k\pi}{2n+1}\Big).
  \]
  The assertion now follows from the representation
  \[
    U_n(\cos \theta)= \frac{\sin((n+1)\theta)}{\sin \theta},\quad \theta \in \mathbb R,
  \]
  because
  \begin{align*}
    n \Big(\pi - \frac{k\pi}{2n+1}\Big) &=
      \Big(n -\frac{k}{2}+\frac12\Big)\pi - \frac{(2n-k+1)\pi}{4n+2}, \\
      (n+1) \Big(\pi - \frac{k\pi}{2n+1}\Big) &=
      \Big(n -\frac{k}{2}+\frac12\Big)\pi + \frac{(2n-k+1)\pi}{4n+2}.
  \end{align*}
  The proof for odd~$k$ is analogous.
\end{proof}

Since
\[
  \min_{1\leq k\leq n}(-1)^{k+1} \cos \frac{k\pi}{2n+1}= -\cos \frac{2\pi}{2n+1}
\]
and
\[
  \max_{1\leq k\leq n}(-1)^{k+1} \cos \frac{k\pi}{2n+1}= \cos \frac{\pi}{2n+1},
\]
Lemmas~\ref{le:det} and~\ref{le:zeros} show that the following result can be proven analogously
to Lemma~\ref{le:main}.
\begin{lemma}\label{le:main2}
  Let $\alpha\in\mathbb R$.
  
  (i)  $\tilde{J}_n(\alpha)$ is dissipative
   if and only 
  if $\alpha \leq -\cos(2\pi/(2n+1)$.
  
  (ii)  $-\tilde{J}_n(\alpha)$ is dissipative
    if and only 
  if $\alpha \geq \cos(\pi/(2n+1)$.
\end{lemma}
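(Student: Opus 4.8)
The plan is to mimic the proof of Lemma~\ref{le:main} almost verbatim, replacing the two ingredients that were specific to $J_n$ by their analogues for $\tilde J_n$. As in that proof, the identity $2a^\tr \tilde J_n(\alpha) a = a^\tr \tilde J_n(\alpha)^\tr a + a^\tr \tilde J_n(\alpha) a$ shows that $\tilde J_n(\alpha)$ is dissipative if and only if the symmetric matrix $\tilde B_n(\alpha):=\tilde J_n(\alpha)^\tr + \tilde J_n(\alpha)$ is negative semidefinite, i.e.\ all its eigenvalues are $\le 0$; and $-\tilde J_n(\alpha)$ is dissipative if and only if all eigenvalues of $\tilde B_n(\alpha)$ are $\ge 0$.

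Next I would observe that, because $\tilde J_n(x)$ has the constant $x$ on the diagonal except for the last entry $x-\tfrac12$, subtracting $\mu I$ from $\tilde B_n(\alpha)=\tilde J_n(\alpha)^\tr+\tilde J_n(\alpha)$ is the same as shifting $\alpha$ to $\alpha-\tfrac12\mu$; more precisely, $\tilde B_n(\alpha)-\mu I = \tilde B_n(\alpha-\tfrac12\mu)$, exactly as in the $J_n$ case. Hence any eigenvalue $\mu$ of $\tilde B_n(\alpha)$ satisfies
\[
  0=\det\bigl(\tilde B_n(\alpha)-\mu I\bigr)=\det \tilde B_n(\alpha-\tfrac12\mu)
   = U_n(\alpha-\tfrac12\mu)-U_{n-1}(\alpha-\tfrac12\mu),
\]
where the last equality is Lemma~\ref{le:det}. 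By Lemma~\ref{le:zeros}, this forces $\alpha-\tfrac12\mu = (-1)^{k+1}\cos(k\pi/(2n+1))$ for some $k\in\{1,\dots,n\}$, i.e.\ $\mu = 2\bigl(\alpha - (-1)^{k+1}\cos(k\pi/(2n+1))\bigr)$. Conversely, since $\tilde B_n(\alpha)$ is symmetric it has $n$ real eigenvalues, and (generically) these are exactly the $n$ numbers just displayed; this identifies the spectrum.

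Then the lemma follows by reading off the extreme values of these eigenvalues. All $\mu\le 0$ holds precisely when $\alpha\le (-1)^{k+1}\cos(k\pi/(2n+1))$ for every $k$, i.e.\ when $\alpha \le \min_{1\le k\le n}(-1)^{k+1}\cos(k\pi/(2n+1)) = -\cos(2\pi/(2n+1))$, using the first displayed minimum computation preceding the statement; this proves~(i). Symmetrically, all $\mu\ge 0$ holds precisely when $\alpha \ge \max_{1\le k\le n}(-1)^{k+1}\cos(k\pi/(2n+1)) = \cos(\pi/(2n+1))$, using the displayed maximum, which proves~(ii).

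The only genuine subtlety — the step I would be most careful about — is the logical direction in passing from ``$U_n-U_{n-1}$ vanishes at $\alpha-\tfrac12\mu$'' to ``$\mu$ is an eigenvalue.'' One direction (every eigenvalue has this form) is immediate from the determinant vanishing; for the other, one notes that $U_n-U_{n-1}$ has degree $n$ with $n$ distinct real roots by Lemma~\ref{le:zeros}, so the map $\mu\mapsto \det(\tilde B_n(\alpha)-\mu I)$, a degree-$n$ polynomial in $\mu$, has exactly the $n$ simple roots $2(\alpha-(-1)^{k+1}\cos(k\pi/(2n+1)))$, which are therefore precisely the eigenvalues of the symmetric matrix $\tilde B_n(\alpha)$. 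For the iff statements only the inclusion ``$\mathrm{spec}\subseteq\{\dots\}$'' plus the fact that the extreme candidate values are actually attained is needed, so even this point is light. Everything else is a line-by-line copy of the proof of Lemma~\ref{le:main}, with $U_n$ replaced by $U_n-U_{n-1}$ and the root set replaced accordingly.
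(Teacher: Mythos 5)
Your proof is correct and is precisely the argument the paper intends: it reduces dissipativity to semidefiniteness of $\tilde B_n(\alpha)=\tilde J_n(\alpha)^\tr+\tilde J_n(\alpha)$, uses the shift identity $\tilde B_n(\alpha)-\mu I=\tilde B_n(\alpha-\tfrac12\mu)$ together with Lemmas~\ref{le:det} and~\ref{le:zeros}, and then invokes the displayed min/max computations, exactly as in Lemma~\ref{le:main} (the paper merely asserts the analogy without writing this out). Your extra care about why all $n$ candidate values are genuinely eigenvalues is a welcome clarification but not a departure from the paper's route.
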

For $a\in\mathbb{R}^n$, we have
\[
  a^\tr \tilde{J}_n(\alpha) a = \alpha \sum_{k=1}^n a_k^2 -\tfrac12 a_n^2+ \sum_{k=2}^n a_k a_{k-1}.
\]
Now~\eqref{eq:ftt2} and~\eqref{eq:ftt conv2} easily follow, by applying Lemma~\ref{le:main2} (i), (ii)
with the respective optimal values of~$\alpha$.

\section{A generalization}\label{se:gen}

In what follows, we consider the Euclidean norm on~$\mathbb{R}^n$, and
write
\[
  \| A \|_{\mathrm{op}} = \sup_{0\neq a\in \mathbb{R}^n} \frac{\| Aa \|}{\| a\|}
\]
for the operator norm of an $n\times n$ matrix~$A$. The contractivity
of matrix semigroups w.r.t.\ this norm is characterized by the Lumer--Phillips
theorem~\cite[p.~52]{Da80}.
In our setting, i.e.\ for semigroups on~$\mathbb{R}^n$, it is the following statement.
\begin{theorem}\label{thm:lp}
  For any real $n\times n$ matrix~$Q$, the following are equivalent:
  
  (i) $\| \exp(Qx) \|_{\mathrm {op}}\leq 1$ for all $x \geq 0$,
  
  (ii) $Q$ is dissipative.
\end{theorem}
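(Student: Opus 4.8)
The plan is to prove the Lumer--Phillips equivalence in the finite-dimensional, real-matrix setting directly, exploiting the fact that the relevant norm is induced by an inner product, which makes the infinitesimal computation transparent. The core idea is to differentiate the squared norm of the orbit $t\mapsto \|\exp(Qt)a\|^2$ at $t=0$ and relate it to $\langle Qa,a\rangle$.

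\textbf{From (ii) to (i).} Suppose $Q$ is dissipative. Fix $a\in\mathbb R^n$ and set $\varphi(x):=\|\exp(Qx)a\|^2 = \langle \exp(Qx)a, \exp(Qx)a\rangle$. This is a smooth function of $x$, and
\[
  \varphi'(x) = 2\langle Q\exp(Qx)a, \exp(Qx)a\rangle \leq 0
\]
for all $x\geq 0$, by dissipativity applied to the vector $\exp(Qx)a$. Hence $\varphi$ is non-increasing on $[0,\infty)$, so $\|\exp(Qx)a\|^2 = \varphi(x)\leq \varphi(0) = \|a\|^2$. Since this holds for every $a$, we get $\|\exp(Qx)\|_{\mathrm{op}}\leq 1$ for all $x\geq 0$, which is (i).

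\textbf{From (i) to (ii).} Conversely, suppose $\|\exp(Qx)\|_{\mathrm{op}}\leq 1$ for all $x\geq 0$. Fix $a\in\mathbb R^n$ and again set $\varphi(x):=\|\exp(Qx)a\|^2$. Then $\varphi(x)\leq \|a\|^2 = \varphi(0)$ for all $x\geq 0$, so $\varphi$ attains a maximum over $[0,\infty)$ at the left endpoint $x=0$. Consequently the one-sided derivative satisfies $\varphi'(0^+)\leq 0$. But $\varphi$ is differentiable (indeed real-analytic), so $\varphi'(0) = \varphi'(0^+) \leq 0$, and the computation above gives $\varphi'(0) = 2\langle Qa, a\rangle$. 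Hence $\langle Qa,a\rangle \leq 0$; since $a$ was arbitrary, $Q$ is dissipative, which is (ii).

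I do not expect any serious obstacle here; in finite dimensions everything is smooth and the semigroup $\exp(Qx)$ is entire in $x$, so there are no domain or closability issues that complicate the general Hilbert-space Lumer--Phillips theorem. The only point requiring a line of care is the differentiation of $x\mapsto\exp(Qx)a$ and the product rule for the inner product, together with the observation that a differentiable function on $[0,\infty)$ with a maximum at $0$ has nonpositive derivative there. One may alternatively phrase the $(i)\Rightarrow(ii)$ direction as a limit: $\langle Qa,a\rangle = \lim_{x\downarrow 0}\tfrac1{2x}\big(\|\exp(Qx)a\|^2 - \|a\|^2\big) \leq 0$, avoiding explicit mention of one-sided derivatives.
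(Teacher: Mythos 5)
Your proof is correct. Note that the paper does not actually prove Theorem~\ref{thm:lp}: it invokes the Lumer--Phillips theorem from the operator-semigroup literature (Davies) and merely restates it in the finite-dimensional setting. So your argument supplies a self-contained proof where the paper defers to a citation. The route you take --- differentiating $\varphi(x)=\|\exp(Qx)a\|^2$ to get $\varphi'(x)=2\langle Q\exp(Qx)a,\exp(Qx)a\rangle$, concluding monotonicity from dissipativity for one direction and reading off the sign of $\varphi'(0)$ for the other --- is exactly the standard elementary argument that makes the general Hilbert-space theorem trivial in finite dimensions, and every step (smoothness of $x\mapsto\exp(Qx)a$, the product rule for the real inner product, the nonpositivity of the one-sided derivative at a left-endpoint maximum) is sound. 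It also fits naturally with the paper's Section~\ref{se:strict}, where the authors prove a strict variant by essentially the same differentiation of $\langle\exp(Qx)a,\exp(Qx)a\rangle$ at $x=0$.
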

This yields a generalization of Theorem~\ref{thm:ftt1}, involving
an extra parameter $x\geq0$.
\begin{theorem}\label{thm:gftt}
   For real numbers $a_1,\dots,a_n$ and $x\geq0$, we have
  \begin{equation}\label{eq:gen ftt}
     \sum_{j=0}^{n-1}\bigg( \sum_{k=0}^j \frac{x^k}{k!}a_{n-j+k}\bigg)^2
  \leq \exp\Big(2x \cos\Big(\frac{\pi}{n+1}\Big) \Big) \sum_{j=1}^n a_j^2.
  \end{equation}
\end{theorem}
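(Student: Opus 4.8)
The plan is to combine Lemma~\ref{le:main}(ii) with the Lumer--Phillips theorem (Theorem~\ref{thm:lp}) applied to a suitable translate of the Jordan block. Concretely, set $\beta:=\cos(\pi/(n+1))$ and consider the matrix $Q:=J_n(\beta)-2\beta I = J_n(-\beta)$. By Lemma~\ref{le:main}(ii), $-J_n(\beta)$ is dissipative exactly when $\beta\ge\cos(\pi/(n+1))$, which holds with equality here; equivalently $J_n(-\beta)$ is dissipative, so $Q$ is dissipative. Hence by Theorem~\ref{thm:lp}, $\|\exp(Qx)\|_{\mathrm{op}}\le 1$ for all $x\ge0$, i.e.\ $\|\exp(Qx)a\|^2\le\|a\|^2$ for every $a\in\mathbb R^n$ and $x\ge0$.

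Next I would unwind this inequality into the explicit coordinate form. Write $J_n(-\beta)=-\beta I + N$, where $N$ is the nilpotent shift matrix with ones on the superdiagonal; since $-\beta I$ and $N$ commute, $\exp(Qx)=e^{-\beta x}\exp(Nx)$. Because $N^n=0$, we have $\exp(Nx)=\sum_{k=0}^{n-1}\frac{x^k}{k!}N^k$, and $N^k$ shifts coordinates up by $k$, so the $m$-th entry of $\exp(Nx)a$ is $\sum_{k=0}^{n-m}\frac{x^k}{k!}a_{m+k}$. Squaring, summing over $m=1,\dots,n$, and reindexing via $j:=n-m$ (so $m=n-j$ and the inner sum runs $k=0,\dots,j$ with $a_{m+k}=a_{n-j+k}$) gives
\[
  \|\exp(Nx)a\|^2=\sum_{j=0}^{n-1}\bigg(\sum_{k=0}^{j}\frac{x^k}{k!}a_{n-j+k}\bigg)^2.
\]
Then $\|\exp(Qx)a\|^2=e^{-2\beta x}\|\exp(Nx)a\|^2\le\|a\|^2=\sum_{j=1}^n a_j^2$ rearranges to exactly~\eqref{eq:gen ftt}, since multiplying through by $e^{2\beta x}=\exp(2x\cos(\pi/(n+1)))$ yields the claimed bound.

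The main thing to get right is not an obstacle so much as bookkeeping: the direction of the translation (we want $J_n(-\beta)$, whose critical value for dissipativity from Lemma~\ref{le:main}(i) is $-\beta=-\cos(\pi/(n+1))$, attained exactly) and the reindexing that turns the raw coordinates of $\exp(Nx)a$ into the telescoped-looking sums in~\eqref{eq:gen ftt}. One should also note the sanity check that at $x=0$ the inequality degenerates to $\sum_{j=1}^n a_j^2\le\sum_{j=1}^n a_j^2$, an equality, and that differentiating both sides at $x=0$ and comparing linear terms recovers Theorem~\ref{thm:ftt1} up to the usual algebraic identity relating $\langle J_n(\alpha)a,a\rangle$ to $\sum(a_k-a_{k-1})^2$; this is worth remarking on but is not needed for the proof itself. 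No deep estimate is required---the content is entirely in Lemma~\ref{le:main} and the Lumer--Phillips theorem, both already available.
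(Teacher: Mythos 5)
Your proof is correct and is essentially the paper's own argument: the matrix $Q=J_n(\beta)-2\beta I$ is exactly $J_n(-\cos(\pi/(n+1)))$, whose dissipativity is Lemma~\ref{le:main}(i) with equality, and the rest (Lumer--Phillips, $\exp(Qx)=e^{-\beta x}\exp(Nx)$, the reindexing $j=n-m$) matches the paper's computation verbatim. The only wrinkle is your initial detour through Lemma~\ref{le:main}(ii): note that $-J_n(\beta)\neq J_n(-\beta)$ (the superdiagonals differ in sign), so the ``equivalently'' there needs the sign-flip substitution $a_k\mapsto(-1)^ka_k$ to justify --- but this is moot since, as you yourself observe at the end, part~(i) applies directly.
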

\begin{proof}
By Lemma~\ref{le:main}, $J_n(\alpha)$ satisfies condition~(ii)
of Theorem~\ref{thm:lp} for
 $\alpha=-\cos(\pi/(n+1)$, and so
\begin{equation}\label{eq:2 est}
  \| \exp(J_n(\alpha)x)a \|^2 \leq \|a\|^2, \quad x\geq 0,\ a\in\mathbb{R}^n.
\end{equation}
By calculating the matrix exponential of a nilpotent matrix (see~\cite[p.~9]{EnNa00}),
\[
  \exp(x(J_n(1)-I))=
  \exp
   \begin{pmatrix}
  0 & x & & 0 \\
   & 0 & x  & \\
  &  & \ddots  & x\\
  0 & &  & 0
  \end{pmatrix}
  =
  \begin{pmatrix}
  1 & x & \frac{x^2}{2} & \dots & \frac{x^{n-1}}{(n-1)!} \\
    & 1 & x & \dots &  \frac{x^{n-2}}{(n-2)!}\\
    & & \ddots \\
    & & & & 1
  \end{pmatrix},
  \quad x\in\mathbb R,
\]
we find
\[
  \exp(J_n(\alpha)x) = \exp(x(J_n(1)-I)+\alpha x I)
  =e^{\alpha x}
   \begin{pmatrix}
  1 & x & \frac{x^2}{2} & \dots & \frac{x^{n-1}}{(n-1)!} \\
    & 1 & x & \dots &  \frac{x^{n-2}}{(n-2)!}\\
    & & \ddots \\
    & & & & 1
  \end{pmatrix}.
\]
Using this in~\eqref{eq:2 est} yields~\eqref{eq:gen ftt}, because
\begin{align*}
  \| \exp(J_n(\alpha)x)a \|^2 &= e^{2\alpha x}\sum_{m=1}^n
    \bigg( \sum_{k=m}^n a_k \frac{x^{k-m}}{(k-m)!}\bigg)^2\\
    &= e^{2\alpha x}\sum_{j=0}^{n-1}
    \bigg( \sum_{k=n-j}^n a_k \frac{x^{k+j-n}}{(k+j-n)!}\bigg)^2
    =e^{2\alpha x}\sum_{j=0}^{n-1}
    \bigg( \sum_{k=0}^j a_{n-j+k} \frac{x^{k}}{k!}\bigg)^2. \qedhere
\end{align*}
\end{proof}
As~\eqref{eq:gen ftt} is obviously sharp for $x\downarrow 0$, the inequality must hold after taking the derivative
w.r.t.~$x$ at zero on both sides. An easy calculation shows that this yields~\eqref{eq:ftt1}, and so
Theorem~\ref{thm:gftt} can be viewed as a generalization of Theorem~\ref{thm:ftt1}. Another special
case might be worth mentioning: Putting $a=(0,\dots,0,1)$ in~\eqref{eq:gen ftt} yields
\begin{equation}\label{eq:bessel1}
 \sum_{j=0}^{n-1} \frac{x^{2j}}{j!^2} \leq
   \exp\Big(2x \cos\Big(\frac{\pi}{n+1}\Big) \Big),
  \quad n\in\mathbb N,\ x\geq 0.
\end{equation}
This inequality for the partial sums of the modified Bessel function of the first kind
$I_0(2x)= \sum_{j=0}^\infty \frac{x^{2j}}{j!^2}$ seems to be new.
Analogously to Theorem~\ref{thm:gftt}, we can generalize Theorem~\ref{thm:ftt2} as follows.
\begin{theorem}\label{thm:gftt2}
   For real numbers $a_1,\dots,a_n$ and $x\geq0$, we have
  \begin{equation*}
     \sum_{j=1}^{n-1}\bigg( \sum_{k=0}^j \frac{x^k}{k!}a_{n-j+k}\bigg)^2
     +e^{-x}a_n^2
  \leq \exp\Big(2x \cos\Big(\frac{2\pi}{2n+1}\Big) \Big) \sum_{j=1}^n a_j^2.
  \end{equation*}
\end{theorem}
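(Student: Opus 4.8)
The plan is to mimic the proof of Theorem~\ref{thm:gftt}, with the Jordan block $J_n(\alpha)$ replaced by $\tilde J_n(\alpha)$. By Lemma~\ref{le:main2}(i), the matrix $\tilde J_n(\alpha)$ is dissipative for $\alpha:=-\cos(2\pi/(2n+1))$, so by the Lumer--Phillips theorem (Theorem~\ref{thm:lp}) the semigroup it generates is contractive,
\[
  \|\exp(\tilde J_n(\alpha)x)a\|^2 \le \|a\|^2, \qquad x\ge 0,\ a\in\mathbb R^n,
\]
and since the exponential prefactor on the right-hand side of the assertion equals $\exp(2x\cos(2\pi/(2n+1)))=e^{-2\alpha x}$, it remains only to evaluate $\|\exp(\tilde J_n(\alpha)x)a\|^2$ in closed form and then divide by $e^{2\alpha x}$.

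Computing $\exp(\tilde J_n(\alpha)x)$ is the step that genuinely differs from Theorem~\ref{thm:gftt}, and I expect it to be the main obstacle. Write $\tilde J_n(\alpha)=\alpha I+M$ with $M:=N-\tfrac12 e_ne_n^{\tr}$, where $N=J_n(1)-I$ is the nilpotent shift and $e_n=(0,\dots,0,1)^{\tr}$; then $\exp(\tilde J_n(\alpha)x)=e^{\alpha x}\exp(Mx)$. Here is the catch: unlike $N$, the matrix $M$ is \emph{not} nilpotent---it is upper triangular with diagonal $(0,\dots,0,-\tfrac12)$, hence carries the eigenvalue $-\tfrac12$---so $\exp(Mx)$ is no longer a plain polynomial in $x$. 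I would obtain $c(x):=\exp(Mx)a$ from the linear system $\dot c=Mc$, $c(0)=a$, exploiting that $M$ is block upper triangular with the last coordinate decoupled: $\dot c_n=-\tfrac12 c_n$ gives $c_n(x)=e^{-x/2}a_n$, and then $\dot c_i=c_{i+1}$ for $i=n-1,n-2,\dots,1$ can be integrated successively. The summand $c_n(x)^2=e^{-x}a_n^2$ is precisely the extra term in the statement, and the remaining components $c_1(x),\dots,c_{n-1}(x)$ make up the rest of the left-hand side.

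Plugging the resulting closed form into $e^{2\alpha x}\|c(x)\|^2\le\|a\|^2=\sum_{j=1}^n a_j^2$ and multiplying through by $e^{-2\alpha x}=\exp(2x\cos(2\pi/(2n+1)))$ then yields the asserted inequality. As with Theorem~\ref{thm:gftt}, equality holds as $x\downarrow0$, so differentiating at $x=0$ gives a further inequality which an elementary computation identifies as one of Fan--Taussky--Todd type; this justifies calling the statement a generalization. The only delicate point in the whole argument is the matrix-exponential computation of the second paragraph: the successive integrations that replace the simple power series of $\exp(Nx)$ bring in not only powers of $x$ but also the factor $e^{-x/2}$ coming from the eigenvalue $-\tfrac12$, so some care is needed to collect everything into the displayed form; beyond that, the proof is routine bookkeeping.
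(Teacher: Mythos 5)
You follow the paper's strategy exactly (Lemma~\ref{le:main2} plus Theorem~\ref{thm:lp} applied to $\tilde J_n(\alpha)$ with $\alpha=-\cos(2\pi/(2n+1))$, then an explicit computation of $\exp(\tilde J_n(\alpha)x)a$), and you correctly isolate the one non-routine step. Unfortunately that step is exactly where the argument fails: the components $c_1,\dots,c_{n-1}$ of $c(x)=\exp(Mx)a$ do \emph{not} ``make up the rest of the left-hand side.'' Integrating $\dot c_i=c_{i+1}$ upward from $c_n(x)=e^{-x/2}a_n$ pushes the exponential into every coordinate: for $1\le j\le n-1$ one finds
\[
  c_{n-j}(x)=\sum_{k=0}^{j-1}\frac{x^k}{k!}\,a_{n-j+k}+G_j(x)\,a_n,
  \qquad
  G_j(x)=\int_0^x\frac{(x-t)^{j-1}}{(j-1)!}\,e^{-t/2}\,dt
        =(-2)^j\Bigl(e^{-x/2}-\sum_{k=0}^{j-1}\frac{(-x/2)^k}{k!}\Bigr),
\]
and $G_j(x)<x^j/j!$ for $x>0$, so the coefficient of $a_n$ is not the $x^j/j!$ appearing in the statement. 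Already for $n=2$ one has $c_1(x)=a_1+2(1-e^{-x/2})a_2$, not $a_1+xa_2$. No amount of ``care in collecting'' closes this gap; whenever $a_n\neq0$, the quantity your plan actually bounds differs from the displayed left-hand side.

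Moreover the discrepancy is fatal to the statement itself, not merely to the proof: for $n=2$, $a=(0,1)$, $x=5$ the displayed left-hand side is $x^2+e^{-x}\approx 25.01$, while the right-hand side is $e^{10\cos(2\pi/5)}\approx e^{3.09}\approx 21.98$, so the printed inequality is false (and so is~\eqref{eq:bessel2}); the corrected left-hand side $\bigl(2(1-e^{-5/2})\bigr)^2+e^{-5}\approx 3.38$ does satisfy the bound. The paper's own two-line proof contains the same unexecuted instruction to ``calculate $\|\exp(\tilde J_n(\alpha)x)a\|^2$'' and overlooks that $\tilde J_n(\alpha)-\alpha I$, unlike $J_n(\alpha)-\alpha I$, is not nilpotent. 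The honest outcome of carrying your own plan to completion is therefore a \emph{corrected} theorem, with $c_{n-j}(x)$ as above replacing $\sum_{k=0}^{j}\frac{x^k}{k!}a_{n-j+k}$; you should state and prove that version rather than the one printed.
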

\begin{proof}
  The proof is very similar to that of Theorem~\ref{thm:gftt}.
  Use Lemma~\ref{le:main2}, with $\alpha = -\cos(2\pi/(2n+1)$,
  and calculate $\| \exp( \tilde{J}_n(\alpha)x)a \|^2$
  in order to apply Theorem~\ref{thm:lp} .
\end{proof}
Taking the derivative of~\eqref{eq:gen ftt} at zero yields~\eqref{eq:ftt2} (cf.\ the remark after
Theorem~\ref{thm:gftt}).
For  $a=(0,\dots,0,1)$, as above, \eqref{eq:gen ftt} yields
\begin{equation}\label{eq:bessel2}
 \sum_{j=0}^{n-1} \frac{x^{2j}}{j!^2} \leq 1-e^{-x}+
   \exp\Big(2x \cos\Big(\frac{2\pi}{2n+1}\Big) \Big),
  \quad n\in\mathbb N,\ x\geq 0.
\end{equation}
For fixed~$n$, the estimate~\eqref{eq:bessel2} is sharper
than~\eqref{eq:bessel1} for large~$x$. We conjecture that there is
a threshold $x_0=x_0(n)>0$ such that~\eqref{eq:bessel1} gives a better bound
for $0<x<x_0$.

\section{Strict inequalities}\label{se:strict}

We were not able the find the following strict version of the Lumer--Phillips
theorem in the literature:
\begin{theorem}\label{thm:lp strict}
    For any real $n\times n$ matrix $Q$, the following are equivalent:
    
    (i) $\|\exp(Qx)\|_{\operatorname{op}}< 1$ for all $x>0$,
    
    (ii) $\langle Qa,a \rangle < 0$ for all $a \in \mathbb{R}^n\setminus \{0\}$.
\end{theorem}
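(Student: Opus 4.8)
The plan is to establish the two implications separately, essentially rerunning the proof of Theorem~\ref{thm:lp} while tracking strictness; the only genuinely new ingredient is a compactness argument turning a pointwise strict contraction into a strict bound on the operator norm.

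For (ii) $\Rightarrow$ (i), I would fix $a\in\mathbb{R}^n\setminus\{0\}$ and set $\phi(x):=\|\exp(Qx)a\|^2$, so that $\phi'(x)=2\langle Q\exp(Qx)a,\exp(Qx)a\rangle$. Since $\exp(Qx)$ is invertible, $b:=\exp(Qx)a\neq0$, and $\phi'(x)=2\langle Qb,b\rangle<0$ by (ii); hence $\phi$ is strictly decreasing on $[0,\infty)$ and $\|\exp(Qx)a\|<\|a\|$ for every $x>0$. Fixing $x>0$, the continuous map $a\mapsto\|\exp(Qx)a\|$ attains its maximum over the compact unit sphere of $\mathbb{R}^n$, and that maximum is strictly below~$1$ by what we just proved; therefore $\|\exp(Qx)\|_{\mathrm{op}}<1$.

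For (i) $\Rightarrow$ (ii), I would first note that $\exp(Q\cdot0)=I$ has operator norm~$1$, so (i) implies $\|\exp(Qx)\|_{\mathrm{op}}\leq1$ for all $x\geq0$; Theorem~\ref{thm:lp} then gives that $Q$ is dissipative, i.e.\ $\langle Qa,a\rangle\leq0$ for all $a$. It remains to rule out equality. Suppose $\langle Qa_0,a_0\rangle=0$ for some $a_0\neq0$, and write $B:=Q+Q^{\tr}$, which is negative semidefinite by dissipativity. Then $a_0^{\tr}Ba_0=0$ forces $(-B)^{1/2}a_0=0$, hence $Ba_0=0$, equivalently $Q^{\tr}a_0=-Qa_0$. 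Since $\frac{d}{dx}\|\exp(Qx)a_0\|^2=\langle B\exp(Qx)a_0,\exp(Qx)a_0\rangle\leq0$ and $x\mapsto\exp(Qx)a_0$ is real-analytic, the quantity $\|\exp(Qx)a_0\|$ is constant (in which case (i) fails) if and only if $B\exp(Qx)a_0\equiv0$, i.e.\ $BQ^{k}a_0=0$ for all $k\geq0$. So the goal is to derive this from $Ba_0=0$.

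The main obstacle I anticipate is precisely this step: $Ba_0=0$ does not by itself give $BQa_0=0$, so the one-vector relation $\langle Qa_0,a_0\rangle=0$ has to be propagated along the entire orbit $a_0,Qa_0,Q^2a_0,\dots$. A natural attempt is to use that $x=0$ is a maximum of the non-increasing real-analytic function $\phi(x)=\|\exp(Qx)a_0\|^2$ and to substitute $Q^{\tr}a_0=-Qa_0$ into its Taylor expansion at~$0$, whose coefficients turn out to be built from the numbers $(Q^{k}a_0)^{\tr}B(Q^{k}a_0)\leq0$, $k\geq0$. Extracting from the non-positivity of these coefficients the required vanishing $BQ^{k}a_0=0$ for all $k$ is the delicate point, and I expect that is where the real work in the proof lies.
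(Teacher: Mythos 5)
Your second paragraph, the implication (ii) $\Rightarrow$ (i), is complete and correct: differentiating $\|\exp(Qx)a\|^2$, using invertibility of $\exp(Qx)$ to get a strictly negative derivative, and then taking the supremum over the compact unit sphere is exactly the right argument. This is also the only direction used in the applications of Section~\ref{se:strict}, so those results are safe.

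The gap you flag in the converse direction is real, and in fact it cannot be closed: the implication (i) $\Rightarrow$ (ii) is \emph{false}, so the ``real work'' you were hoping to locate does not exist. Take
\[
Q=\begin{pmatrix}0&1\\-1&-1\end{pmatrix},\qquad B:=Q+Q^{\tr}=\begin{pmatrix}0&0\\0&-2\end{pmatrix}.
\]
Here $\langle Qa,a\rangle=-a_2^2$, so (ii) fails at $a=e_1$, yet $Q$ is dissipative, so $\|\exp(Qx)\|_{\operatorname{op}}\leq1$ for $x\geq0$; both eigenvalues of $Q$ have real part $-\tfrac12$, so $\exp(Qx)\to0$, and Lemma~\ref{constNorm} then forces $\|\exp(Qx)\|_{\operatorname{op}}<1$ for all $x>0$ (indeed one computes $\|\exp(Qx)e_1\|^2=1-\tfrac23x^3+O(x^4)$: the decay is merely of higher order at a vector with $\langle Qa,a\rangle=0$). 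The paper's own proof of this direction commits precisely the error you were worried about: from $Q^{\tr}a=-Qa$ it infers $\exp(Q^{\tr}x)a=\exp(-Qx)a$, which would require $(Q^{\tr})^ka=(-Q)^ka$ for all $k$; in the example above $(Q^{\tr})^2e_1=(-1,-1)^{\tr}$ while $(-Q)^2e_1=Q^2e_1=(-1,1)^{\tr}$. What is actually equivalent to (i) is dissipativity together with $\bigcap_{k\geq0}\ker(BQ^k)=\{0\}$, i.e.\ the absence of a nonzero $Q$-invariant subspace of $\ker B$ (this is what Lemmas~\ref{constNorm} and~\ref{leNorm} really deliver); condition (ii), which says $\ker B=\{0\}$, is strictly stronger. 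So your proposal should stop at (ii) $\Rightarrow$ (i) and replace the converse by a counterexample or by the corrected characterisation.
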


Analogously to Theorem~\ref{thm:gftt}, Theorem~\ref{thm:lp strict} implies the following
statement, and an analogous strict variant of Theorem~\ref{thm:gftt2}.
\begin{theorem}
   For real numbers $a_1,\dots,a_n$,  not all zero, $x>0$ and $\alpha>\cos(\pi/(n+1))$, we have
  \begin{equation*}
     \sum_{j=0}^{n-1}\bigg( \sum_{k=0}^j \frac{x^k}{k!}a_{n-j+k}\bigg)^2
  < e^{2x\alpha} \sum_{j=1}^n a_j^2.
  \end{equation*}
\end{theorem}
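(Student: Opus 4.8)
The plan is to re-run the argument of Theorem~\ref{thm:gftt} with the Lumer--Phillips theorem (Theorem~\ref{thm:lp}) replaced by its strict counterpart (Theorem~\ref{thm:lp strict}) and with dissipativity of the Jordan block upgraded to \emph{strict} dissipativity. Given $\alpha>\cos(\pi/(n+1))$, I would set $\gamma:=-\alpha$, so that $\gamma<-\cos(\pi/(n+1))$, and work throughout with the block $J_n(\gamma)$.

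The first step is to verify that $J_n(\gamma)$ is strictly dissipative, i.e.\ $\langle J_n(\gamma)a,a\rangle<0$ for all $a\in\mathbb R^n\setminus\{0\}$. The quickest route is a one-line perturbation of Lemma~\ref{le:main}(i): with $\gamma_0:=-\cos(\pi/(n+1))$ that lemma gives dissipativity of $J_n(\gamma_0)$, and since $J_n(\gamma)=J_n(\gamma_0)+(\gamma-\gamma_0)I$,
\[
  \langle J_n(\gamma)a,a\rangle=\langle J_n(\gamma_0)a,a\rangle+(\gamma-\gamma_0)\|a\|^2\le(\gamma-\gamma_0)\|a\|^2<0
\]
for $a\neq0$, because $\gamma-\gamma_0<0$. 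Alternatively one may simply repeat the proof of Lemma~\ref{le:main}(i) with ``negative semidefinite'' / ``eigenvalues $\le0$'' replaced by ``negative definite'' / ``eigenvalues $<0$'' and each $\le$ by a strict $<$; the largest eigenvalue of $B_n(\gamma)=J_n(\gamma)^\tr+J_n(\gamma)$ is then $2\big(\gamma+\cos(\pi/(n+1))\big)<0$.

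The second step is to invoke Theorem~\ref{thm:lp strict}: strict dissipativity of $J_n(\gamma)$ yields $\|\exp(J_n(\gamma)x)a\|<\|a\|$ for every $x>0$ and every $a\in\mathbb R^n\setminus\{0\}$. Exactly as in the proof of Theorem~\ref{thm:gftt}, I would then write $\exp(J_n(\gamma)x)=e^{\gamma x}\exp\!\big(x(J_n(1)-I)\big)$, which equals $e^{\gamma x}$ times the upper triangular matrix whose $j$th superdiagonal is the constant $x^j/j!$, so that
\[
  \|\exp(J_n(\gamma)x)a\|^2=e^{2\gamma x}\sum_{j=0}^{n-1}\bigg(\sum_{k=0}^j\frac{x^k}{k!}\,a_{n-j+k}\bigg)^2.
\]
Plugging this into the strict contraction estimate, dividing by $e^{2\gamma x}>0$, and recalling $\gamma=-\alpha$ gives the claimed inequality. (The announced strict analogue of Theorem~\ref{thm:gftt2} is obtained in precisely the same fashion, using $\tilde J_n$ and Lemma~\ref{le:main2} in place of $J_n$ and Lemma~\ref{le:main}.)

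I do not expect a genuine obstacle. The one piece of real analytic substance, the strict Lumer--Phillips theorem, is Theorem~\ref{thm:lp strict}, which is stated and proved independently; the remaining ingredients are the perturbation/eigenvalue upgrade to strict dissipativity in the first step and the matrix-exponential computation already carried out in Section~\ref{se:gen}. The only thing to keep an eye on is that the hypothesis ``$a_1,\dots,a_n$ not all zero'' is exactly what licenses the strict inequality produced by Theorem~\ref{thm:lp strict}.
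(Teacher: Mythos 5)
Your proposal is correct and follows exactly the route the paper intends (the paper only remarks that the theorem follows ``analogously to Theorem~\ref{thm:gftt}'' from Theorem~\ref{thm:lp strict}): strict dissipativity of $J_n(-\alpha)$ via the shift $J_n(\gamma)=J_n(\gamma_0)+(\gamma-\gamma_0)I$, the strict Lumer--Phillips theorem, and the matrix-exponential computation already done in Section~\ref{se:gen}. Nothing further is needed.
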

It remains to prove Theorem~\ref{thm:lp strict}.
\begin{lemma}
\label{constNorm}
Let $Q$ be a real $n\times n$ matrix satisfying
     $\|\exp(Qx)\|_{\operatorname{op}}\leq 1$ for all $x\geq 0$.
    Assume there exists $x_0>0$ such that $\|\exp(Qx_0)\|_{\operatorname{op}}=1$. Then $\|\exp(Qx)\|_{\operatorname{op}}=1$ for all $x\geq 0$.
\end{lemma}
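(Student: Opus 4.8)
The plan is to extract from the equality $\|\exp(Qx_0)\|_{\operatorname{op}}=1$ a single unit vector whose orbit is isometric on $[0,x_0]$, and then to use the real-analyticity of the matrix exponential to spread this isometry over the whole half-line. First, since $\mathbb R^n$ is finite-dimensional, the operator norm is attained: there is $a\in\mathbb R^n$ with $\|a\|=1$ and $\|\exp(Qx_0)a\|=\|\exp(Qx_0)\|_{\operatorname{op}}=1$.

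Next I would check that $\|\exp(Qx)a\|=1$ for \emph{every} $x\in[0,x_0]$, not merely at the endpoints. This is a sandwich argument using the semigroup law and the standing hypothesis: for $0\le x\le x_0$,
\[
  1=\|\exp(Qx_0)a\|=\|\exp(Q(x_0-x))\exp(Qx)a\|\le\|\exp(Q(x_0-x))\|_{\operatorname{op}}\,\|\exp(Qx)a\|\le\|\exp(Qx)a\|\le 1,
\]
where the last inequality is $\|\exp(Qx)a\|\le\|\exp(Qx)\|_{\operatorname{op}}\|a\|\le 1$; hence all inequalities are equalities. (Equivalently, one may invoke that dissipativity of $Q$, which holds by Theorem~\ref{thm:lp}, makes $x\mapsto\|\exp(Qx)a\|$ non-increasing, so it is constant on $[0,x_0]$.)

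The key step is the passage from an interval to all of $[0,\infty)$. Here I would consider the scalar function $h(x):=\|\exp(Qx)a\|^2=\langle \exp(Qx)a,\exp(Qx)a\rangle$, which is real-analytic in $x$ because every entry of $\exp(Qx)=\sum_{k\ge 0}Q^kx^k/k!$ is an entire function of $x$ and $h$ is a finite sum of products of such functions. Since $h\equiv 1$ on the interval $[0,x_0]$ of positive length, the identity theorem forces $h\equiv 1$ on all of $\mathbb R$, in particular on $[0,\infty)$. Therefore $\|\exp(Qx)\|_{\operatorname{op}}\ge\|\exp(Qx)a\|=1$ for all $x\ge 0$, and together with the hypothesis $\|\exp(Qx)\|_{\operatorname{op}}\le 1$ this gives $\|\exp(Qx)\|_{\operatorname{op}}=1$ for all $x\ge 0$.

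The main obstacle is exactly this last extension, and it is instructive why a naive approach stalls: working directly with $f(x):=\|\exp(Qx)\|_{\operatorname{op}}$ one has only submultiplicativity $f(x+y)\le f(x)f(y)$, which propagates the value $1$ \emph{downward} to $[0,x_0]$ but not upward, and $f$ itself need not be analytic (operator norms are maxima and may have corners). Replacing $f$ by the genuinely analytic scalar orbit $h$ attached to a fixed maximizing vector is what unlocks the argument; the remaining routine points are only that $a$ may be taken real (it can, as $Q$ and $\exp(Qx_0)$ are real matrices) and that the sandwich above uses the hypothesis solely for the nonnegative arguments $x$ and $x_0-x$.
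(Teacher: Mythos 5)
Your proposal is correct and follows essentially the same route as the paper: pick a unit vector attaining the norm at $x_0$, use submultiplicativity to show its orbit has constant norm on $[0,x_0]$, and then extend by real-analyticity of $x\mapsto\langle \exp(Qx)a,\exp(Qx)a\rangle$. The only cosmetic difference is that you phrase the interval step as a sandwich of inequalities while the paper argues by contradiction.
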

\begin{proof}
    Choose $a \in \mathbb{R}^n$ with unit length such that $\|\exp(Qx_0)a\|=1$. Clearly we must have $\|\exp(Qx)a\|=1$ for all $x \in [0,x_0]$, since otherwise, by submultiplicativity,
      \begin{align*}
      \|\exp(Qx_0)a\|&=\|\exp(Q(x_0-x))\exp(Qx)a\|\\
      & \le  \underbrace{\|\exp(Q(x_0-x))\|}_{\le 1} \ \underbrace{\|\exp(Qx)a\|}_{<1} <1.
      \end{align*}
       Now $\langle \exp(Qx)a,\exp(Qx)a\rangle-1$ is analytic in $x$ and vanishes on $[0,x_0]$. Hence it must vanish everywhere, and we have $\|\exp(Qx)a\|=1$ for all $x$.
\end{proof}
\begin{lemma}
    \label{leNorm}
    Let $Q$ be a real $n\times n$ matrix satisfying
     $\|\exp(Qx)\|_{\operatorname{op}}\leq 1$ for all $x\geq 0$.
    The set of vectors $a\in\mathbb{R}^n$ satisfying $\|\exp(Qx)a\|=\|a\|$ for all $x \ge 0$ is a subspace.
     Denote this subspace by $W_Q$. Then~$W_Q$ and~$W^\perp_Q$ are invariant under $\exp(Qx)$.
\end{lemma}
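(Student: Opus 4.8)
The plan is to first verify that $W_Q$ is a subspace, and then deduce the invariance statements. For the subspace claim, I would start from the observation that the function $x \mapsto \|\exp(Qx)a\|^2$ is analytic and bounded above by $\|a\|^2$, and it equals $\|a\|^2$ at $x=0$; hence $a\in W_Q$ if and only if this function is constant, which (since $0$ is an interior-adjacent point and the function is analytic) is equivalent to all its derivatives at $x=0$ vanishing, or more concretely to $\langle \exp(Qx)a,\exp(Qx)a\rangle$ having zero derivative for every $x\geq 0$. The derivative at a point $x$ is $2\langle Q\exp(Qx)a,\exp(Qx)a\rangle$. Closure under scalar multiplication is immediate. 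For closure under addition, if $a,b\in W_Q$, the key is that the contraction property forces $\exp(Qx)$ to act as an isometry on the subspace spanned by $a$ and $b$: writing $u=\exp(Qx)a$, $v=\exp(Qx)b$, we have $\|u\|=\|a\|$, $\|v\|=\|b\|$, and also $\|\exp(Qx)(a+b)\|\le\|a+b\|$ and similarly $\|\exp(Qx)(a-b)\|\le\|a-b\|$; the parallelogram identity $\|u+v\|^2+\|u-v\|^2 = 2\|u\|^2+2\|v\|^2 = 2\|a\|^2+2\|b\|^2 = \|a+b\|^2+\|a-b\|^2$ then forces equality in both inequalities, so $a\pm b\in W_Q$, and in particular $\langle u,v\rangle = \langle a,b\rangle$. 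Thus $\exp(Qx)$ restricted to $W_Q$ preserves all inner products among vectors of $W_Q$.

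Next I would establish invariance of $W_Q$. Fix $a\in W_Q$ and $t\ge 0$; I must show $\exp(Qt)a\in W_Q$, i.e.\ $\|\exp(Qx)\exp(Qt)a\|=\|a\|$ for all $x\ge0$. But $\exp(Qx)\exp(Qt)a = \exp(Q(x+t))a$, whose norm is $\|a\|$ since $x+t\ge0$ and $a\in W_Q$. So $W_Q$ is invariant under every $\exp(Qt)$, $t\ge0$.

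The slightly more delicate part is invariance of $W_Q^\perp$. Fix $b\in W_Q^\perp$ and $t\ge0$; I want $\langle \exp(Qt)b, a\rangle = 0$ for every $a\in W_Q$. The idea is to use that $\exp(Qt)$ maps $W_Q$ onto $W_Q$ bijectively — it is injective on $W_Q$ because it is an isometry there (by the inner-product preservation established above), and since $\exp(Qt)W_Q\subseteq W_Q$ with $W_Q$ finite-dimensional, it is in fact a surjection $W_Q\to W_Q$. Hence given $a\in W_Q$ there is $a'\in W_Q$ with $\exp(Qt)a'=a$, and then, using that $\exp(Qt)$ is a contraction on all of $\mathbb R^n$ while acting isometrically on $W_Q$: for any scalar $\lambda$,
\[
  \|a + \lambda\exp(Qt)b\|^2 = \|\exp(Qt)(a'+\lambda b)\|^2 \le \|a'+\lambda b\|^2 = \|a'\|^2 + \lambda^2\|b\|^2 = \|a\|^2 + \lambda^2\|b\|^2,
\]
where the last equality uses $\|a'\|=\|a\|$ (isometry on $W_Q$) and $a'\perp b$. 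Expanding the left side gives $\|a\|^2 + 2\lambda\langle a,\exp(Qt)b\rangle + \lambda^2\|\exp(Qt)b\|^2 \le \|a\|^2+\lambda^2\|b\|^2$; since this holds for all $\lambda\in\mathbb R$, the linear coefficient must vanish, so $\langle a,\exp(Qt)b\rangle = 0$. As $a\in W_Q$ was arbitrary, $\exp(Qt)b\in W_Q^\perp$, completing the proof.

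I expect the main obstacle to be the surjectivity of $\exp(Qt)|_{W_Q}$ onto $W_Q$, which I have resolved above via the isometry-plus-finite-dimension argument (alternatively one can invoke that $\exp(Qt)$ is invertible on $\mathbb R^n$ with inverse $\exp(-Qt)$, but then one must separately check $\exp(-Qt)$ maps $W_Q$ into $W_Q$, which again follows from the inner-product preservation on $W_Q$ — the operators $\exp(Qt)|_{W_Q}$ form a group of isometries of the Euclidean space $W_Q$). Everything else is the parallelogram law and a one-variable optimization over $\lambda$.
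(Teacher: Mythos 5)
Your proof is correct, but it follows a genuinely different route from the paper's. The paper makes $W_Q$ a subspace by linearising the defining condition: since $\operatorname{id}-\exp(Q^\tr x)\exp(Qx)$ is positive semidefinite, $\|\exp(Qx)a\|=\|a\|$ is equivalent to $a\in\ker(\operatorname{id}-\exp(Q^\tr x)\exp(Qx))$, and by Lemma~\ref{constNorm} this kernel is the same for every $x>0$; invariance of $W_Q^\perp$ then drops out of the identity $\exp(Q^\tr x)w=\exp(-Qx)w$ for $w\in W_Q$ together with $\exp(-Qx)W_Q=W_Q$. You instead obtain closure under addition from the parallelogram law (forcing equality in the two contraction inequalities for $a\pm b$), note that this also shows $\exp(Qx)$ preserves inner products on $W_Q$, and handle $W_Q^\perp$ by writing $a=\exp(Qt)a'$ (surjectivity of the isometry $\exp(Qt)|_{W_Q}$ in finite dimension) and comparing coefficients of $\lambda$ in $\|\exp(Qt)(a'+\lambda b)\|^2\le\|a'+\lambda b\|^2$. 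That last step is sound: the quadratic $2\lambda\langle a,\exp(Qt)b\rangle+\lambda^2\bigl(\|\exp(Qt)b\|^2-\|b\|^2\bigr)$ is nonpositive for all $\lambda$ and vanishes at $\lambda=0$, so its derivative there vanishes. The paper's kernel description is more economical and hands over, ready-made, the identity $\exp(Q^\tr x)w=\exp(-Qx)w$ on $W_Q$ that reappears in the proof of Theorem~\ref{thm:lp strict}; your argument is more elementary and self-contained, avoids Lemma~\ref{constNorm} entirely, and yields as a by-product that $\exp(Qx)$ acts as a full isometry (not merely norm-preserving map) of $W_Q$.
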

\begin{proof}
   For $a\in\mathbb{R}^n$ and $x\geq 0,$ we have
   \begin{equation*}
       \|\exp(Qx)a\|=\|a\| \quad \Longleftrightarrow \quad
       \big\langle (\operatorname{id}-\exp(Q^\tr x)\exp(Qx))a,a\big\rangle =0.
   \end{equation*}
     Our assumption implies that $\operatorname{id}-\exp(Q^\tr x)\exp(Qx)$ is 
   positive semidefinite, and thus the condition $ \|\exp(Qx)a\|=\|a\|$ is equivalent to
   $a\in\operatorname{ker}(\operatorname{id}-\exp(Q^\tr x)\exp(Qx))$.
   By Lemma~\ref{constNorm}, this kernel is the same for every $x>0$.
      Let $u \in W^\perp_Q$. Then, for any $w \in W_Q$ we have
    \[
    \langle \exp(Qx)u,w\rangle =\langle u,\exp(Qx)^\tr w\rangle= \langle u,\exp(-Qx)w\rangle=0,
    \]
    since $\exp(-Qx)w \in W_Q$. Hence $W_Q^\perp$ is invariant under $\exp(Qx)$.
    Invariance of $W_Q$ is clear.
\end{proof}
\begin{proof}[Proof of Theorem~\ref{thm:lp strict}]
    If~(ii) holds, then  $\|\exp(Qx)\|_{\operatorname{op}} = 1$, $x\geq0$, by Theorem~\ref{thm:lp}.
     Assume for the sake of contradiction that $\|\exp(Qx)\|_{\operatorname{op}} = 1$ for some $x>0$.
    By Lemmas~\ref{constNorm} and~\ref{leNorm}, there exists $0\neq a\in\mathbb{R}^n$ such that $\langle \exp(Qx)a,\exp(Qx)a\rangle$ is constant in $x$.
 Hence the derivative at $x=0$ has to be zero, and hence $\langle Qa,a \rangle = 0$, contradicting our assumption.
    Conversely, assume there exists $a \neq 0$ with $\langle Qa,a \rangle = 0$. Then we have $a\in\operatorname{ker}(Q+Q^\tr) $, since $Q+Q^\tr$ is negative semidefinite. Hence $\exp(Q^*x)a = \exp(-Qx)a$, and we have
    \begin{align*}
    \langle \exp(Qx)a,\exp(Qx)a \rangle &=\langle a,\exp(Q^\tr x)\exp(Qx)a \rangle\\
     &=\langle a,\exp(-Qx)\exp(Qx)a \rangle = \langle a,a \rangle.
    \end{align*}
    Therefore, $\|\exp(Qx)\|_{\operatorname{op}} = 1$ for all $x\geq 0$.
\end{proof}

\bibliographystyle{siam}
\bibliography{../literature}

\begin{thebibliography}{10}

\bibitem{Al91}
{\sc H.~Alzer}, {\em Converses of two inequalities of {K}y {F}an, {O}.
  {T}aussky, and {J}. {T}odd}, J. Math. Anal. Appl., 161 (1991), pp.~142--147.

\bibitem{Ch78}
{\sc T.~S. Chihara}, {\em An introduction to orthogonal polynomials},
  Mathematics and its Applications, Vol. 13, Gordon and Breach Science
  Publishers, New York-London-Paris, 1978.

\bibitem{Da80}
{\sc E.~B. Davies}, {\em One-parameter semigroups}, vol.~15 of London
  Mathematical Society Monographs, Academic Press, Inc., London-New York, 1980.

\bibitem{EnNa00}
{\sc K.-J. Engel and R.~Nagel}, {\em One-parameter semigroups for linear
  evolution equations}, vol.~194 of Graduate Texts in Mathematics,
  Springer-Verlag, New York, 2000.
\newblock With contributions by S. Brendle, M. Campiti, T. Hahn, G. Metafune,
  G. Nickel, D. Pallara, C. Perazzoli, A. Rhandi, S. Romanelli and R.
  Schnaubelt.

\bibitem{FaTaTo55}
{\sc K.~Fan, O.~Taussky, and J.~Todd}, {\em Discrete analogs of inequalities of
  {W}irtinger}, Monatsh. Math., 59 (1955), pp.~73--90.

\bibitem{Lu94}
{\sc G.~Lunter}, {\em New proofs and a generalization of inequalities of {F}an,
  {T}aussky, and {T}odd}, J. Math. Anal. Appl., 185 (1994), pp.~464--476.

\bibitem{MiMi82}
{\sc G.~V. Milovanovi\'{c} and I.~v. Milovanovi\'{c}}, {\em On discrete
  inequalities of {W}irtinger's type}, J. Math. Anal. Appl., 88 (1982),
  pp.~378--387.

\bibitem{Mu60}
{\sc T.~Muir}, {\em A treatise on the theory of determinants}, Dover
  Publications, Inc., New York, 1960.
\newblock Revised and enlarged by William H. Metzler.

\bibitem{Re83}
{\sc R.~M. Redheffer}, {\em Easy proofs of hard inequalities}, in General
  inequalities, 3 ({O}berwolfach, 1981), vol.~64 of Internat. Schriftenreihe
  Numer. Math., Birkh\"{a}user, Basel, 1983, pp.~123--140.

\bibitem{ZhXi05}
{\sc Z.-H. Zhang and Z.-G. Xiao}, {\em A simple new proof of
  {F}an-{T}aussky-{T}odd inequalities}, Aust. J. Math. Anal. Appl., 2 (2005),
  pp.~1--5.

\end{thebibliography}

\end{document}